\newtheorem{theorem}{Theorem}
\title{\textbf{Three Generalizations of Erdős Szekeres: $k$-Modal Subsequences}}
\author{
Charles Gong$^{a}$
}
\date{
$^{a}$Department of Mathematical Sciences, Carnegie Mellon University, Pittsburgh, United States \\
}
\begin{document}

\maketitle

\begin{abstract}
Erdős and Szekeres showed that given a permutation $p$ of $[n]$, and the sequence defined by \newline $(p(1), p(2), \ldots, p(n))$, there exists either a decreasing or increasing subsequence, not necessarily contiguous, of length at least $\sqrt{n}$. Fan Chung considered subsequences that can have at most one change of direction, i.e. an increasing and then decreasing subsequence, or a decreasing and then increasing subsequence. She called these unimodal subsequences, and showed there exists a unimodal subsequence of length at least $\sqrt{3n}$, up to some constants \cite{chung}. She conjectured that a permutation of $n$ contains a $k$-modal (at most $k$ changes in direction) subsequence of length at least $\sqrt{(2k+1)n}$ up to some constants. Zijian Xu proved this conjecture in 2024 \cite{xu}, and we will provide another substantially different proof using "sophisticated labeling arguments" instead of "underlying poset structures behind k-modal subsequences." We also show that there exists an increasing first $k$-modal subsequence of length at least $\sqrt{2kn}$. 
\end{abstract}

\noindent\textbf{Keywords:} permutations, subsequences, $k$-modal subsequences, Erdős–Szekeres theorem, Fan Chung’s conjecture

\maketitle

\section{Definitions}
We write a sequence of length $n$ as $(a_1, a_2, \ldots, a_n)$. Such a sequence is a permutation if $a_i \in [n]$ for all $i \in [n]$, and for each $b \in [n]$, there exists at least one $i$ such that $a_i = b$. A sequence $(a_1, a_2, \ldots, a_n)$ is increasing if $a_i < a_{i+1}$ for all $i \in [n-1]$. It is decreasing if $a_i > a_{i+1}$ for all $i \in [n-1]$. \\

Formally, for $k \in \mathbb{N}$ we define a $k$-modal subsequence of $(a_1, a_2, \ldots, a_n)$ as a sequence $(a_{i_1}, a_{i_2}, \ldots, a_{i_m})$ such that $i_j \in [n]$ for all $j \in [m]$, and for all $j \in [m-1]$ we have $i_j < i_{j+1}$ (i.e. it's a subsequence). Furthermore, there exists a sequence of indices $(i_{j_1}, i_{j_2}, \ldots, i_{j_p})$ where $p \leq k$ (at most $k$ indices), and $i_{j_{\ell}} \in [m]$ for all $\ell \in [p]$, and for all $\ell \in [p-1]$ we have $i_{j_{\ell}} < i_{j_{\ell+1}}$, such that for all $\ell \in [p-1]$ we have $(a_{i_1}, a_{i_2}, \ldots, a_{i_{j_1}}), (a_{i_{j_1}}, a_{i_{j_1 + 1}}, \ldots, a_{i_{j_2}}), \ldots, (a_{i_{j_p}}, a_{i_{j_p + 1}}, \ldots, a_{i_m})$ are either increasing or decreasing (i.e., at most $k$ changes in direction). We call such a subsequence increasing first if $(a_{i_1}, a_{i_2}, \ldots, a_{i_{j_1}})$ is increasing, and decreasing if $(a_{i_1}, a_{i_2}, \ldots, a_{i_{j_1}})$ is decreasing. Note a subsequence of length $1$ is both increasing and decreasing, which means in general that any $k$-modal subsequence is both an increasing first and a decreasing first $k+1$-modal subsequence. Also, the subsequences $(a_{i_1}, a_{i_2}, \ldots, a_{i_{j_1}}), (a_{i_{j_1}}, a_{i_{j_1 + 1}}, \ldots, a_{i_{j_2}}), \ldots,$ \newline $(a_{i_{j_p}}, a_{i_{j_p + 1}}, \ldots, a_{i_m})$ form the $p+1$ "parts" of the $k$-modal subsequence. \\

\section{$\sqrt{2kn}$ Increasing First $k$-modal Subsequences}

\begin{theorem}
In any permutation of $[n]$, there exists an increasing first $k$-modal subsequence of length at least $\sqrt{2kn}$, up to some constants. 
\end{theorem}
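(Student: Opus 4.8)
The plan is to generalize the classical Erdős–Szekeres labeling argument. Recall that in the base case one assigns to each index $i$ the pair $(u_i, d_i)$, where $u_i$ and $d_i$ are the lengths of the longest increasing and decreasing subsequences ending at $i$, and observes that this map is injective because, given $i < i'$, either $a_i < a_{i'}$ forces $u_{i'} > u_i$ or $a_i > a_{i'}$ forces $d_{i'} > d_i$. I would attach to each index a richer label that records the $k$-modal structure. Concretely, for each $t \in \{0, 1, \dots, k\}$ let $a_t(i)$ denote the length of the longest increasing-first subsequence ending at $i$ that uses at most $t$ changes of direction, and track alongside it the direction of the last part (the parity of the number of parts governing whether an extremal configuration ends increasing or decreasing). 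Writing $L(i) = a_k(i)$, the quantity $\max_i L(i)$ is exactly the length we wish to bound from below.

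The engine of the proof is an extension lemma capturing how these labels react to the local order. Given $i < i'$ with $a_i < a_{i'}$ (an ascent), any optimal subsequence ending at $i$ whose last part is increasing may be prolonged by appending $a_{i'}$ at no cost, whereas one ending with a decreasing part spawns a new increasing part, spending one additional change; the descent case $a_i > a_{i'}$ is symmetric with the roles of increasing and decreasing reversed. From this I would extract, first, an injectivity statement (so that distinct indices receive distinct labels), and second, monotonicity relations among the $a_t(i)$ that pin down the region in which the label vectors can live. The cleanest bookkeeping is in terms of the increments $b_0(i) = a_0(i)$ and $b_t(i) = a_t(i) - a_{t-1}(i) \geq 0$, which split $L(i)$ into the lengths contributed by successive parts.

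With injectivity in hand the count $n \leq \#\{\text{labels}\}$ reduces to bounding the number of admissible label vectors, and the final bound $\sqrt{2kn}$ should emerge by convexity (AM–GM) once the label region is identified as essentially two-dimensional: the up-directed and down-directed contributions sum to $L$ (up to the shared turning points), and splitting each across the at most $k+1$ alternating parts forces the attainable pairs into a region of area on the order of $L^2/(2k)$. I expect the main obstacle to be precisely this accounting of the turning points — an element at a local extremum is shared between an increasing and a decreasing part — since it is exactly this $+1$-per-turn effect that should separate the constant $2k$ here from the $2k+1$ of the general (not-necessarily-increasing-first) theorem. Carefully charging each turning point once, and verifying that the extremal case of the optimization occurs when all parts have comparable length, should close the gap and yield a subsequence of length at least $\sqrt{2kn}$ up to constants.
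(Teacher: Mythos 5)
There is a genuine gap, and it sits exactly where your sketch waves its hands: the counting step. Your labels $a_0(i) \le a_1(i) \le \cdots \le a_k(i)$ are all \emph{backward-looking} (longest increasing-first $t$-modal subsequences ending at $i$), so the only a priori constraints on a single label vector are monotonicity and $a_t(i) \le N$. Injectivity of the vector map (which is plausibly true, though even that needs care: an ascent or descent extends an optimal witness ``at no cost'' only when some optimal witness at budget $t$ ends with a part of the right direction or uses strictly fewer than $t$ changes, so you must argue via the parity of the number of parts or via lower budgets) therefore gives only $n \le \#\{1 \le a_0 \le \cdots \le a_k \le N\} = \binom{N+k}{k+1} \approx N^{k+1}/(k+1)!$, i.e.\ $N \gtrsim ((k+1)!\,n)^{1/(k+1)}$, which is far weaker than $\sqrt{2kn}$. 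The assertion that ``the label region is essentially two-dimensional'' with area about $N^2/(2k)$ is precisely the content of the theorem, and nothing in your extension lemma forces it: that lemma constrains how labels of \emph{different} indices compare, not which vectors are globally realizable, and since every coordinate looks backward from $i$, there is no identity tying the coordinates of one vector together the way a forward/backward pair would be tied.

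The paper's proof supplies the two ingredients you are missing. First, it uses a two-dimensional \emph{mixed} label: $x(a_i)$ is the longest increasing subsequence ending at $a_i$, while $y(a_i)$ is the longest decreasing-first $k$-modal subsequence \emph{starting} at $a_i$; gluing the two at $a_i$ produces an increasing-first $(k+1)$-modal subsequence, hence the linear constraint $x(a_i)+y(a_i)-1 \le N$ confining the (injective) image to a triangle. Second --- and this is the real engine --- the induction hypothesis is used not merely to produce a long subsequence but as a \emph{density bound}: the set of indices with $y(a_i) < \frac{kN}{k+1}$ has size at most $\frac{kN^2}{2(k+1)^2}$, since a larger set would, by the hypothesis applied to that sub-permutation (after relabeling to preserve pairwise comparisons), contain a decreasing-first $k$-modal subsequence of length $\frac{kN}{k+1}$, forcing its first element to have a forbidden $y$-value. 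Adding the roughly $\frac{N^2}{2(k+1)^2}$ lattice points of the strip $y \ge \frac{kN}{k+1}$, $x+y \le N+1$ yields $n \le \frac{N^2}{2(k+1)}$. Incidentally, your intuition that a ``$+1$-per-turn'' accounting is what separates $2k$ from $2k+1$ is not how the constants arise here: the $\sqrt{(2k+1)n}$ bound for unrestricted $k$-modal subsequences comes from Theorem 2 (an element admitting both an increasing-first and a decreasing-first $k$-modal subsequence ending at it), not from charging turning points. To rescue your plan you would need inter-element constraints strong enough to cut the realizable label set down by a factor on the order of $N^{k-1}$, and the inductive density argument is, in effect, exactly such a constraint.
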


\begin{proof}
We proceed by induction. The case where $k = 0$ is trivial. We start with our base case set at $k = 1$ for illustrative purposes. \\

For the permutation $(a_1, a_2, \ldots, a_n)$, define the function $x : [n] \rightarrow [n]$ where $x(a_i)$ is the maximum length of an increasing subsequence \emph{ending} at $a_i$, including $a_i$. So for example in $(1,5,3,2,4)$, we have $x(4) = 3$ because of the subsequence $(1,3,4)$. Define $y : [n] \rightarrow [n]$ where $y(a_i)$ is the maximum length of a decreasing subsequence \emph{starting} at $a_i$, including $a_i$. Note that $x(a_i) + y(a_i) - 1$ is the length of an increasing first $1$-modal subsequence where it "changes direction" at $a_i$. \\

We claim that the function $f : [n] \rightarrow [n] \times [n]$ defined by $f(a_i) = (x(a_i), y(a_i))$ is an injection. Without loss of generality, assume $i \leq j$. Assume $(x(a_i), y(a_i)) = (x(a_j), y(a_j))$. From $x(a_i) = x(a_j)$, we know that $a_i \geq a_j$; this is because if $a_i < a_j$, we would know that $x(a_j) \geq x(a_i) + 1$, because for any increasing subsequence ending at $a_i$, we can get a longer one by appending $a_j$. Using similar reasoning, $y(a_i) = y(a_j)$ implies that $a_i \leq a_j$. This shows that $a_i = a_j$, and thus $f$ is injective. \\

Define $N$ to be the maximum length of an increasing first $1$-modal subsequence. As mentioned above, for each $a_i$ we have $x(a_i) + y(a_i) - 1 \leq N$, or $x(a_i) + y(a_i) \leq N+1$. Furthermore, we know $x(a_i), y(a_i) \geq 1$ for any $i$. Thus, $f$ is an injective function mapping $a_i$ to the triangular region defined by $x + y \leq N+1$, $x, y \geq 1$. There's at $\frac{N(N+1)}{2}$ points in this triangle. By the injectivity of $f$, we have that $n \leq \frac{N(N+1)}{2}$, and this gives us the $\sqrt{2n} \leq N$ bound up to some constants. Instead, we use the area of the triangle $n \leq \frac{N^2}{2}$ to get the bound $\sqrt{2n} \leq N$ exactly, which is potentially wrong, but will make things look nicer later. From now on, we will approximate the number of lattice points in rectangular and triangular regions by the areas of those regions. \\

Now we go on to the induction step. Assume for any permutation of length $n$, there exists an increasing first $k$-modal subsequence of length at least $\sqrt{2kn}$. We claim for any permutation of length $n$, there exists a decreasing first $k$-modal subsequence of length at least $\sqrt{2kn}$. We prove this by flipping the permutation, i.e. converting $(a_1, a_2, \ldots a_n)$ to $(n+1-a_1, n+1-a_2, \ldots, n+1-a_n)$, which is also a permutation, finding a length $\sqrt{2kn}$ increasing subsequence $(n+1-a_1, n+1-a_2, \ldots, n+1-a_n)$, and finally flipping again to get a length $\sqrt{2kn}$ decreasing subsequence in $(a_1, a_2, \ldots a_n)$. \\

Now we show there exists an increasing first $k+1$-modal subsequence of length at least $\sqrt{2(k+1)n}$. Define $x(a_i)$ to be the maximum length of an increasing subsequence ending at $a_i$. Define $y(a_i)$ to be the maximum length of a decreasing first $k$-modal subsequence starting at $a_i$. Define $N$ to be the maximum length of an increasing first $k+1$-modal subsequence. Again, note that $x(a_i) + y(a_i) - 1 \leq N$, and that using similar arguments in the $k = 1$ case, we can show $f : [n] \rightarrow [n] \times [n]$ defined by $f(a_i) = (x(a_i), y(a_i))$ is injective. \\

So again, we are mapping $a_i$ to a triangular region. We claim that there are at most $\frac{kN^2}{2(k+1)^2}$ points $(x(a_i), y(a_i))$ where $y(a_i) < \frac{kN}{k+1}$: this is because by the induction hypothesis, among the $a_i$ satisfying $y(a_i) < \frac{kN}{k+1}$, there is a decreasing first $k$-modal susbequence of length at least $\sqrt{2k (\frac{kN^2}{2(k+1)^2})} = \frac{kN}{k+1}$ (note to apply the induction hypthesis, we implicitly used a map $g$ which maps the $a_i$ satisfying $y(a_i) < \frac{kN}{k+1}$ to a permutation which preserves pairwise comparisons, i.e. $a_i < a_j$ if and only if $g(a_i) < g(a_j)$). \\

Note there are roughly $\frac{1}{2} (N - \frac{kN}{k+1})^2 = \frac{N^2}{2(k+1)^2}$ integer lattice points in the region defined by $y \geq \frac{kN}{k+1}$, $x+y \leq N+1$. This gives the bound

$$n \leq \frac{kN^2}{2(k+1)^2} + \frac{N^2}{2(k+1)^2} = \frac{N^2}{2(k+1)}$$

and so $\sqrt{2(k+1)n} \leq N$, as required. \\
\end{proof}

We now give a family of permutations that roughly achieve the $\sqrt{2kn}$ bound. We decided to describe this family, parametrized by $t$, using python code. \\

\begin{verbatim}
def strongMake(k, t):
    kstrong = []
    count = 1
    if k % 2 == 0:
        for i in range(t * (k // 2)):
            for j in reversed(range(count, count + t)):
                kstrong.append(j)
            count = count + t
    else:
        for i in range(t * (k // 2)):
            for j in reversed(range(count, count + t)):
                kstrong.append(j)
            count = count + t
        for i in reversed(range(t)):
            for j in reversed(range(count, count + i)):
                kstrong.append(j)
            count = count + i
    return kstrong
\end{verbatim}

kstrong is the permutation parametrized by $t$. The name is a remnant of Fan Chung's terminology, where she called increasing first $k$-modal subsequences as "strongly $k$-modal subsequences." As an example, consider when $k = 2$ and $t = 5$, where kstrong = $[5, 4, 3, 2, 1, 10, 9, 8, 7, 6, 15, 14, 13, 12, 11, 20, 19, 18, 17, 16, 25, 24, 23, 22, 21]$. When $(i, kstrong[i])$ is mapped onto the $x$-$y$ plane, we get \\

\begin{tikzpicture}[scale=0.4]

  \draw[->] (0,0) -- (26,0) node[right] {$i$};
  \draw[->] (0,0) -- (0,26) node[above] {kstrong$[i]$};

  \foreach \x in {5,10,15,20,25}
    \draw[gray!30] (\x,0) -- (\x,25);
  \foreach \y in {5,10,15,20,25}
    \draw[gray!30] (0,\y) -- (25,\y);

  \foreach \x/\y in {
    1/5, 2/4, 3/3, 4/2, 5/1,
    6/10, 7/9, 8/8, 9/7, 10/6,
    11/15, 12/14, 13/13, 14/12, 15/11,
    16/20, 17/19, 18/18, 19/17, 20/16,
    21/25, 22/24, 23/23, 24/22, 25/21}
    \filldraw[blue] (\x,\y) circle (4pt);

  \foreach \x in {5,10,15,20,25}
    \draw (\x,0) -- (\x,-0.5) node[below] {\x};
  \foreach \y in {5,10,15,20,25}
    \draw (0,\y) -- (-0.5,\y) node[left] {\y};

\end{tikzpicture}

\begin{center}
\small tikz courtesy of chatgpt
\end{center}

As you can see, kstrong consists of contiguous decreasing subsequences, such that for each pair of contiguous decreasing subsequences, all the elements in one contiguous decreasing subsequence is greater than all the elements in the other. All our constructions will share this characteristic. Recall this example above is for $k = 2$ when $t = 5$. In general, for $k = 2$ our constructions will be $t$ contiguous decreasing subsequences of length $t$. We note that in this case any increasing first $2$-modal subsequence has length at most $2t-1$: its two increasing parts can cover at most $t$ points, and its single decreasing part can also convert at most $t$ points. And so $N = 2t - 1$, $n = t^2$ and so $N = \sqrt{4n} - 1$. In the case $k$ is even, we'd have $\frac{kt}{2}$ contiguous decreasing subsequences of length $t$, and using an argument similar to the $k = 2$ case, we note that an increasing first subsequence can have length at most $\frac{kt}{2} + \frac{kt}{2} = kt$. The $\frac{k}{2}$ increasing parts can together get at most $\frac{kt}{2}$ points. The $\frac{k}{2}$ decreasing parts can each get at most $t$ points, for $\frac{k}{2}t$ points in total. And thus $N \leq kt$ with $n = \frac{kt^2}{2}$, and so $N \leq \sqrt{2nk}$. \\

In the case that $k$ is odd, we append a "triangle" consisting of $\frac{t(t-1)}{2}$ points, i.e. contiguous decreasing subsequences of length $t-1, t-2, \ldots, 1$ in that order, at the end of $\frac{(k-1)t}{2}$ contiguous decreasing subsequences of length $t$. In this case where $k$ is odd, we note that the last part is decreasing, and the second to last part is increasing. These last two parts can cover at most $t$ points of the "triangle." For the  $\frac{(k-1)t}{2}$ contiguous decreasing subsequences of length $t$, we repeat our analysis of the $k$ even case to get that $N \leq \frac{(k-1)t}{2} + \frac{(k-1)}{2}t + t = kt $. We have $n = \frac{(k-1)t^2}{2} + \frac{t(t-1)}{2} \approx \frac{kt^2}{2}$, and so $N \leq kt \approx \sqrt{2kn}$. \\

\section{$\sqrt{2kn}$ Increasing and Decreasing Subsequence Ending at the Same Point}
\begin{theorem}
In any length $n$ permutation $(a_1, a_2, \ldots, a_n)$, there exists an $a_i$ such that there exists both an increasing and a decreasing first $k$-modal subsequence of lengths at least $\sqrt{2kn}$ (up to some constants) ending at $a_i$. 
\end{theorem}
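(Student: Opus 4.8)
The plan is to deduce Theorem 2 from Theorem 1 by a covering argument rather than by combining the two subsequences into one: since both subsequences end at $a_i$ (hence both live in indices $\leq i$), they overlap and cannot be concatenated into a single longer subsequence, so the glueing trick of Theorem 1 is unavailable. For each element define $x(a_i)$ to be the length of the longest increasing first $k$-modal subsequence ending at $a_i$, and $y(a_i)$ the length of the longest decreasing first $k$-modal subsequence ending at $a_i$. We want a single index $i$ with $x(a_i), y(a_i) \geq \sqrt{2kn}$ up to constants, so suppose for contradiction that no such index exists, fix a threshold $N$ to be chosen, and set $B = \{i : x(a_i) < N\}$ and $C = \{i : y(a_i) < N\}$; by assumption $B \cup C = [n]$.

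Next I would bound $|B|$ and $|C|$ using Theorem 1. Restricting to the indices in $B$ and relabeling by an order-preserving map $g$ (exactly the map used in the induction step of Theorem 1), Theorem 1 produces an increasing first $k$-modal subsequence of the sub-permutation on $B$ of length at least $\sqrt{2k|B|}$. Pulling this subsequence back to the original permutation gives an increasing first $k$-modal subsequence ending at some $a_{i_0}$ with $i_0 \in B$, so $\sqrt{2k|B|} \leq x(a_{i_0}) < N$, whence $|B| < N^2/(2k)$. The same argument applied to $C$, now invoking the decreasing first analogue of Theorem 1 established in its induction step via the value-flip $a \mapsto n+1-a$, yields $|C| < N^2/(2k)$. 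Combining, $n \leq |B| + |C| < N^2/k$, so $N > \sqrt{kn}$; contrapositively, taking $N = \sqrt{kn}$ gives an index $i$ with both $x(a_i), y(a_i) \geq \sqrt{kn}$, which is the claimed bound up to the constant factor $\sqrt{2}$.

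The main obstacle is precisely this factor of $\sqrt{2}$: the union bound $|B| + |C| \geq n$ is what separates $\sqrt{kn}$ from the stated $\sqrt{2kn}$, and sharpening either strip-count in isolation does not remove it, since each of $B$ and $C$ can genuinely contain nearly $N^2/(2k)$ indices. To chase the exact constant I would replace the union bound by a joint count: set up the labeling $f(a_i) = (x(a_i), y(a_i))$ and argue it is injective, using the extension-and-relabeling idea from Theorem 1, where the delicate case is when the optimal subsequence ending at $a_i$ has a \emph{decreasing} final part, so that appending a later, larger element is blocked by the parity of the direction changes and one must instead show the other coordinate is forced to change strictly. One would then hope to show that the region avoided when there is no good index forces the $n$ distinct labels into effectively a single triangle of area about $N^2/(2k)$ rather than two.

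I expect establishing that "the bad region is one triangle, not two" to be the crux, and I am not certain it closes, because $x(a_i) + y(a_i)$ is genuinely unbounded by any fixed modality length (for the increasing permutation it reaches $2n$), so no global triangle constraint is available to mimic Theorem 1 directly. Absent such a refinement, the covering argument above already proves the theorem up to constants, which is all that is claimed; I would therefore present the covering argument as the core proof and treat the injective-labeling refinement as the route to the optimal constant.
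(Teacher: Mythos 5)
Your covering argument is sound as far as it goes: restricting to $B=\{i : x(a_i)<N\}$, applying Theorem 1 to the order-isomorphic sub-permutation on $B$, and noting the produced subsequence ends at some index inside $B$ does give $|B| < N^2/(2k)$, and likewise $|C| < N^2/(2k)$ via the flip, yielding an index with both statistics at least $\sqrt{kn}$. But this proves the theorem only with constant $\sqrt{k}$, not $\sqrt{2k}$, and here the multiplicative constant is the content rather than the slack: in this paper ``up to some constants'' refers to the lattice-point-versus-area approximations (lower-order corrections), and Theorem 3 consumes Theorem 2 in exactly its sharp form --- the computation $\sqrt{2k \cdot \frac{2kN^2}{(2k+1)^2}} = \frac{2kN}{2k+1}$ needs the $2k$ under the root, and with $\sqrt{kn}$ in its place Chung's conjectured $\sqrt{(2k+1)n}$ bound would not follow. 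The union bound $n \leq |B| + |C|$ is precisely the factor-of-two loss the paper's proof is organized to avoid, so presenting the covering argument as the core proof leaves the actual theorem unproven.

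Your diagnosis of the obstruction is accurate --- there is indeed no pointwise constraint of the form $x(a_i)+y(a_i) \leq N+1$ (as you note, for the identity permutation and $k \geq 1$ both coordinates at $a_n$ equal $n$) --- but your doubt that the refinement closes is resolved affirmatively by the paper, via two ideas your sketch does not supply. First, the sharp count is obtained by induction on $k$, where the induction hypothesis is the ``both at one endpoint'' statement itself: a point in the L-shaped region $\{\min(x,y) \leq \frac{kN}{k+1}\}$ has one small coordinate, so if more than $\frac{kN^2}{2(k+1)^2}$ labels landed there, the hypothesis would produce a single element with \emph{both} $k$-modal statistics exceeding $\frac{kN}{k+1}$, a contradiction --- one application controls the whole union region, with no union bound, which is exactly where your factor of $\sqrt{2}$ is recovered. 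Second, a triangle-type bound is then needed only inside the top-right square $R$ of side $\frac{N}{k+1}$, and it is set-wise rather than pointwise: the paper shows that if for some $(a,b)$ both $|\{a_i : x(a_i)=a,\ y(a_i)\leq b\}| > N+1-a$ and $|\{a_i : y(a_i)=b,\ x(a_i)\leq a\}| > N+1-b$, then casing on which of the two extremal endpoints comes later and appending it to the other's subsequence yields one element carrying both an increasing-first and a decreasing-first subsequence of length $N+1$, contradicting the definition of $N$; a left/down shifting procedure then shows any label set avoiding this condition can be moved into a triangle, hence occupies at most about half of $R$. This condition-plus-shifting lemma, combined with the inductive framing, is the missing idea; without it your approach caps out at $\sqrt{kn}$.
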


\begin{proof}
We again proceed by induction. $k = 0$ is trivial. We start at $k = 1$ for illustrative purposes. \\

Define $x(a_i)$ to be the maximum length of an increasing subsequence ending at $a_i$. Define $y(a_i)$ to be the maximum length of a decreasing subseqeuence ending at $a_i$. Note this definition of $y$ differs than the one given in Theorem 1. However, using a similar argument in Theorem 1 it can be shown that the function $f : [n] \rightarrow [n] \times [n]$ defined by $f(a_i) = (x(a_i), y(a_i))$ is injective. Define $N$ to be the maximum number such that there exists a point $a_i$ with an increasing and a decreasing $1$-modal subsequence both of length at least $N$ ending at $a_i$. Note that $0$-modal subsequences, i.e. increasing or decreasing subsequences, also count as $1$-modal subsequences. In fact, any $0$-modal subsequence is both an increasing and a decreasing first $1$-modal subsequence. So in particular $x(a_i), y(a_i) \leq N$ as well (an increasing subsequence of length $x(a_i)$ is both an increasing and a decreasing first $1$-modal subsequence of length $x(a_i)$ ending at $a_i$, so $x(a_i) \leq N$ and similarly for $y(a_i)$), and the picture to have in mind is that $f$ is mapping the points $a_i$ to a square box $\{(x,y) \in \mathbb{N}^2 : 1 \leq x, y \leq N \}$. \\

We note a key fact. Note that if there exists $(a,b) \in [N]^2$ such that 

$$|\{a_i : x(a_i) = a \text{ and } y(a_i) \leq b\}| > N+1-a$$

and 

$$|\{a_i : y(a_i) = b \text{ and } x(a_i) \leq a\}| > N+1-b$$

then there exists a point $a_i$ such that there exists both an increasing and a decreasing first $1$-modal subsequence of lengths at least $N+1$ ending at $a_i$, which would contradict the definition of $N$. We call the inequalities above \emph{the condition}. So if the condition is satisfied (I will use "triggered" interchangeably with "satisfied") for some $(a,b) \in [N]^2$, there exists a point $a_i$ such that there exists both an increasing and a decreasing first $1$-modal subsequence of lengths at least $N+1$ ending at $a_i$. \\

To see this, define $A_{(a,b)} = \{a_i : x(a_i) = a \text{ and } y(a_i) \leq b\}$ and $B_{(a,b)} = \{a_i : y(a_i) = b \text{ and } x(a_i) \leq a\}$. Now order the elements $a_i \in A_{(a,b)}$ according to $y(a_i)$, so we have $y(a_{i_1}) < y(a_{i_2}) < \ldots < y(a_{i_m})$ where $m > N+1-a$ (we know the $y$-values are distinct because they all share the same $x$-value, and $f$ is injective). Note that $(a_{i_1}, a_{i_2}, \ldots, a_{i_m})$ forms a decreasing subsequence because they share the same $x$-values. Also note by assumption $x(a_{i_1}) = a$. Thus, we know that there's an increasing first $1$-modal subsequence of length $a + m - 1 \geq a + (N+1-a) = N+1$ ending at $a_{i_m}$. Using a similar argument and ordering the elements in $B_{(a,b)}$ as $(a_{j_1}, a_{j_2}, \ldots, a_{j_p})$, there's a decreasing first $1$-modal subsequence of length $\geq N+1$ ending at $a_{j_p}$. \\

Case 1: if $i_m \leq j_p$ then we must have $a_{i_m} \geq a_{j_p}$. Otherwise assume for the sake of contradiction that $a_{i_m} < a_{j_p}$. Note that this implies $a = x(a_{i_m}) < x(a_{j_p})$ (the first equality is from $a_{i_m} \in A_{(a,b)}$). However, by assumption $x(a_{j_p}) \leq a$ because $a_{j_p} \in B_{(a,b)}$, a contradiction. Thus $a_{i_m} \geq a_{j_p}$. Note that as mentioned above, there's a decreasing first $1$-modal subsequence of length $\geq N+1$ ending at $a_{j_p}$, and also there's an increasing first $1$-modal subsequence of length $\geq N+1$ ending at $a_{i_m}$. Since $i_m \leq j_p$ and $a_{i_m} \geq a_{j_p}$, we can append $a_{j_p}$ to this increasing first $1$ modal subsequence, to get an increasing first $1$ modal subsequence of length $\geq N+1$ ending at $a_{j_p}$. \\

Case 2: if $i_m > j_p$ or $j_p < i_m$, then we must have $a_{j_p} \leq a_{i_m}$. Otherwise assume for the sake of contradiction that $a_{j_p} > a_{i_m}$ Note that this implies that $b = y(a_{j_p}) < y(a_{i_m}) \leq b$, a contradiction. Note that as mentioned above, there's an increasing first $1$-modal subsequence of length $\geq N+1$ ending at $a_{i_m}$, and also there's a decreasing first $1$-modal subsequence of length $\geq N+1$ ending at $a_{j_p}$. Since $j_p < i_m$ and $a_{j_p} \leq a_{i_m}$, we can append $a_{i_m}$ to this decreasing first $1$ modal subsequence, to get a decreasing $1$ modal subsequence of length $\geq N+1$ ending at $a_{i_m}$. \\

To recapitulate, we showed that for any $(a,b) \in [N]^2$, if $|A_{(a,b)}| > N+1-a$ and $|B_{(a,b)}| > N+1-b$, then we have an element $a_i$ which has both an increasing and decreasing first $1$ modal subsequence of length $\geq N + 1$ ending at it. \\

Now we consider the question, how many points can we place in the $N$ by $N$ square such that we avoid triggering the condition above? I.e., what is the maximum number of points we can place in the $N$ by $N$ square such that for all $(a,b) \in [N]^2$, we have either $|A_{(a,b)}| \leq N+1-a$ or $|B_{(a,b)}| \leq N+1-b$? We claim that it is $\frac{N(N+1)}{2}$, but again we approximate this by the area of the triangle defined by the convex hull of $T = \{(x,y) \in [N]^2 : x + y \leq N+1\}$ which is $\frac{N^2}{2}$. \\

We note that the set of points in the triangle $T$ does not trigger the condition, and $|T| = \frac{N(N+1)}{2}$. Now we show any set of points $S \subseteq [N]^2$ that does not trigger the condition has at most $\frac{N(N+1)}{2}$ points. \\

To see this, we claim you can fit all the points into triangle $T$ by shifting them leftward or downward. Take note of the points $(a,b) \in S$ that satisfy $|A_{(a,b)}| > N+1-a$. Note for such a point, in order to avoid triggering the condition at $(a,b)$, we must have $|B_{(a,b)}| \leq N+1-b$. Call the set of such points $C$. Consider the procedure below, which may not be possible: \\

Step 1: For each point $(x,y) \in S$ such that there exists $(a,b) \in C$ such that $x \leq a$ and $y \geq b$, shift $(x,y)$ to the left to fit it inside $T$. \\

Step 2: For all the other points, shift them down into $T$. \\

We claim that this procedure is always possible, and it shifts all the elements of $S$ into $T$. For the points that are shifted leftward in step 1, by assumption there exists $(a,b) \in C$ such that $x \leq a$ and $y \geq b$. By definition of $C$, we know that $|A_{(a,b)}| > N+1-a$. In particular, consider the point $(a, y)$. Since $|A_{(a,y)}| \geq |A_{(a,b)}| > N+1-a$ we must have $|B_{(x,y)}| \leq |B_{(a,y)}| \leq N+1-y$. In particular, this means we can shift everything to the left of $(x,y)$ also including $(x,y)$ itself into $T$. \\

Now consider $(x,y) \in S$ such that there is no $(a,b) \in C$ where $x \leq a$ and $y \geq b$. In particular, this means $(x,y) \not\in C$. Thus $|A_{(x,y)}| \leq N+1-x$, and we can shift all the points below $(x,y)$ including $(x,y)$ itself downward into $T$, but this argument only holds if we didn't shift some points leftward into column $x$ in step 1. But this can only happen if we shift some point $(x',y')$ leftward into column $x$, and there exists $(a,b) \in C$ such that $x' \leq a$ and $y' \geq b$. Note $x \leq x' \leq a$ and $y \geq y' \geq b$, contradicting our original assumption. \\

Thus, approximating the number integer lattice points in $T$ by $\frac{N^2}{2}$, we get that if the image of $f$ on $\{i \in [n] : a_i\}$ does not trigger the condition for any $(a,b)$, we have $n \leq \frac{N^2}{2}$, or $\sqrt{2n} \leq N$. Any number of points greater than $\frac{N^2}{2}$ would trigger the condition and result in both an increasing and a decreasing $1$-modal subsequence of length $\geq N+1$ ending at the same point, contradicting the definition of $N$, the maximum number such that there exists a point $a_i$ with an increasing and a decreasing $1$-modal subsequence both of length at least $N$ ending at $a_i$.  \\

Now we step into the induction step. Assume that in a permutation $(a_1, a_2, \ldots, a_n)$ there exists an $a_i$ such that there exists increasing and decreasing first $k$-modal subsequeunce of lengths at least $\sqrt{2kn}$ ending at $a_i$. We want to show there exists an $a_i$ such that there exists increasing and decreasing first $k+1$-modal subsequeunce of lengths at least $\sqrt{2(k+1)n}$ ending at $a_i$. \\

Define $x(a_i)$ to be the maximum length of an increasing $k$-modal subsequence ending at $a_i$, and $y(a_i)$ to be the maximum length of a decreasing $k$-modal subsequence ending at $a_i$. The function $f : [n] \rightarrow [n] \times [n]$ defined by $f(a_i) = (x(a_i), y(a_i))$ is an injection, which can be proved by a similar argument in theorem 1. Define $N$ as the maximum number such that there exists a point $a_i$ with an increasing and a decreasing $k+1$-modal subsequence both of length at least $N$ ending at $a_i$. Again, note a $k$-modal subsequence is both an increasing and a decreasing first $k+1$-modal subsequence, so $f$ maps the permutation into an $N$ by $N$ square. \\

By the induction hypothesis, there's at most $\frac{kN^2}{2(k+1)^2}$ points $(x(a_i), y(a_i))$ such that $x(a_i) \leq \frac{kN}{k+1}$ or $y(a_i) \leq \frac{kN}{k+1}$. This is because $\sqrt{2k (\frac{kN^2}{2(k+1)^2})} = \frac{kN}{k+1}$. Note that the region $\{(x,y) \in \mathbb{R}^2 : x \leq \frac{kN}{k+1} \text{ or } y \leq \frac{kN}{k+1}\}$ is the complement of a square $R$ in the upper right corner, $\{(x,y) \in \mathbb{R}^2 : x > \frac{kN}{k+1} \text{ and } y > \frac{kN}{k+1}\}$. That square has area $(N - \frac{kN}{k+1})^2 = \frac{N^2}{(k+1)^2}$. Using a similar argument in the $k=1$ case, one can show that if the condition is satisfied for some $(a,b) \in [N]^2$, there exists a point $a_i$ such that there exists both an increasing and a decreasing first $k+1$-modal subsequence of lengths at least $N+1$ ending at $a_i$, contradicting the definition of $N$, the maximum number such that there exists a point $a_i$ with an increasing and a decreasing $k+1$-modal subsequence both of length at least $N$ ending at $a_i$. Note that any subset $S \subseteq R$ satisfies a similar conditional implication, and thus using a similar argument we get that if the condition is not triggered, there is at most $\frac{1}{2}(N - \frac{kN}{k+1})^2 = \frac{N^2}{2(k+1)^2}$ points $(x(a_i), y(a_i))$ in $R$. Thus the number of points in total $n$ satisfies the inequality

$$n \leq \frac{kN^2}{2(k+1)^2} + \frac{1}{2}\frac{N^2}{(k+1)^2} = \frac{N^2}{2(k+1)}$$

and so $\sqrt{2(k+1)n} \leq N$. 
\end{proof} 

Note that Theorem 2 implies Theorem 1, but we included a direct proof of Theorem 1 because we thought it was cool. Also, the family of constructions that attains this lower bound is the same as in Theorem 1. \\

\section{$\sqrt{(2k+1)n}$ $k$-modal Subsequence}

\begin{theorem}
In any permutation of $[n]$, there exists a $k$-modal subsequence (either increasing or decreasing first) of length at least $\sqrt{(2k+1)n}$. 
\end{theorem}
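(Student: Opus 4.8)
The plan is to show that if $N$ denotes the length of a longest $k$-modal subsequence, then $n \le \frac{N^2}{2k+1}$, by a labeling-and-counting argument that refines the proofs of Theorems 1 and 2. The new feature, and the reason the coefficient improves from $\frac{1}{2k}$ to $\frac{1}{2k+1}$, is that a general $k$-modal subsequence (not required to begin increasing) can be decomposed through a central direction-change vertex as either a \emph{peak split} or a \emph{valley split}, and I would exploit both decompositions simultaneously rather than just one.

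First I would set up the labels. Fixing a longest $k$-modal subsequence and viewing it as $k+1$ monotone runs, splitting it at its central extremum $v$ writes it as a $j$-modal piece ending at $v$ glued to a $(k-1-j)$-modal piece starting at $v$, where the junction at $v$ supplies the remaining change and $j \approx \frac{k-1}{2}$. Accordingly, for each point $a_i$ I define $x(a_i)$ to be the length of a longest $j$-modal subsequence ending at $a_i$ with prescribed terminal direction, and $y(a_i)$ the length of a longest $(k-1-j)$-modal subsequence starting at $a_i$ with the complementary initial direction, so that gluing at $a_i$ produces a $k$-modal subsequence of length $x(a_i)+y(a_i)-1 \le N$. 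As in Theorem 1, the map $a_i \mapsto (x(a_i),y(a_i))$ is injective (equal coordinates force both $a_i \le a_j$ and $a_i \ge a_j$), so we are again placing $n$ distinct points in the triangle $x+y \le N+1$.

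A peak split alone only reproduces the $\sqrt{2kn}$ bound, so the crux is to fold in the valley split of the same object. I would give each point a second pair of labels $(x'(a_i),y'(a_i))$ built from the opposite orientation (a longest $j$-modal ending with the reversed terminal direction, together with its matching starting piece), yielding a second triangle constraint $x'+y' \le N+1$, alongside the ``through'' constraints coming from full monotone runs and from Theorem 2 applied at $a_i$. Combining these — each point obeys several sum inequalities and several pairwise-injectivity relations — I would bound the number of points by a region of area $\frac{N^2}{2k+1}$, using the induction hypothesis (Theorems 1 and 2 for smaller parameters) to show that points whose relevant label lies below a threshold $\theta$ are few (at most $\frac{\theta^2}{2j}$), and then tuning $\theta$ to split the triangle optimally.

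The main obstacle is the coupling at the shared vertex: the two halves cannot be optimized independently, so naive area counting over a single triangle overshoots (it would give $\sqrt{2(k+1)n}$) or undershoots (plain $\sqrt{2kn}$). The correct constant $2k+1$ must come precisely from the interaction between the peak and valley decompositions, and I expect the technical heart to be a forcing argument in the style of Theorem 2's \emph{condition}: showing that if too many points occupy the relevant region, then some vertex simultaneously admits long enough incoming and outgoing pieces of the right directions to assemble a $k$-modal subsequence of length $N+1$, contradicting maximality. Secondary bookkeeping hurdles are the parity of $k$ (fixing whether the balanced split is a peak or a valley and whether $j=\lfloor\frac{k-1}{2}\rfloor$ or $\lceil\frac{k-1}{2}\rceil$) and checking that the terminal runs at the two free ends can always be taken long, which is what ultimately supplies the extra $+1$ over Theorem 1.
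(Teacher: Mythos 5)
Your proposal has a genuine gap at exactly the place where the constant $2k+1$ must be earned. As you yourself essentially compute, the single peak-split scheme --- $x(a_i)$ a $j$-modal piece ending at $a_i$, $y(a_i)$ a $(k-1-j)$-modal piece starting at $a_i$, the triangle $x+y \le N+1$, and induction to discard points with small labels --- optimizes to $n \le \frac{N^2}{2k}$ for every choice of $j$ and every choice of thresholds. Your proposed fix is to carry a second label pair $(x',y')$ from the valley split, but you never specify how the two triangle constraints interact: injectivity holds only within each pair, the two triangles live on different coordinate planes, and nothing you write excludes configurations in which each point is deep inside one triangle while its other label pair is slack, so straightforward counting over the combined labels gives nothing beyond $\frac{1}{2k}$. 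The ``forcing argument in the style of Theorem 2's condition'' that you defer to is precisely the missing theorem, not a routine verification. There is also a directional mismatch in your appeal to the earlier results: gluing at $a_i$ requires a piece \emph{ending} at $a_i$ with prescribed \emph{terminal} direction and a piece \emph{starting} at $a_i$ with the complementary initial direction, whereas Theorems 1 and 2 control the \emph{initial} direction of their subsequences, and Theorem 2 produces two subsequences that both \emph{end} at the same point; the direction-constrained, mixed-modality analogues your thresholds need would have to be stated and proved separately (e.g.\ via reversal lemmas), and you do not do so.

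The paper's actual proof shows that the right move is structurally different and much shorter: it is explicitly \emph{not} inductive and involves no gluing at all. It sets $x(a_i)$ to be the longest \emph{increasing-first} $k$-modal subsequence ending at $a_i$ and $y(a_i)$ the longest \emph{decreasing-first} $k$-modal subsequence ending at $a_i$ --- both full $k$-modal, both ending at the \emph{same} point --- so the only constraint is the box $x, y \le N$, and there is no triangle $x+y \le N+1$ to fight. Theorem 2, applied to the induced subpermutation, bounds the L-shaped region $\{x < \frac{2kN}{2k+1}\} \cup \{y < \frac{2kN}{2k+1}\}$ by $\frac{2kN^2}{(2k+1)^2}$ in a single stroke: any more points would produce, among them, a point admitting both an increasing-first and a decreasing-first $k$-modal subsequence of length at least $\sqrt{2k \cdot \frac{2kN^2}{(2k+1)^2}} = \frac{2kN}{2k+1}$ ending at it, contradicting its membership in the L-shape. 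A per-coordinate union bound, which is all your ending/starting setup would permit, gives twice this quantity and after optimizing the threshold yields only $N \ge \sqrt{(k+1)n}$; the same-endpoint strength of Theorem 2 is the engine here, and your gluing decomposition is structurally unable to invoke it. Finally, the corner is a \emph{full} square of about $\frac{N^2}{(2k+1)^2}$ lattice points, bounded by injectivity alone --- this unhalved square, not long terminal runs at the free ends, is where the extra $+1$ in $2k+1$ comes from --- and summing gives $n \le \frac{2kN^2}{(2k+1)^2} + \frac{N^2}{(2k+1)^2} = \frac{N^2}{2k+1}$, i.e.\ $N \ge \sqrt{(2k+1)n}$.
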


\begin{proof}
We do NOT proceed by induction. Fix $k \in \mathbb{N}$. Define $x(a_i)$ to be the maximum length of an increasing first $k$-modal subsequence ending at $a_i$, and $y(a_i)$ to be the maximum length of a decreasing first $k$-modal subsequence ending at $a_i$. Define $f : [n] \rightarrow [n] \times [n]$ to be $f(a_i) = f(x(a_i), y(a_i))$. One can show $f$ is injective by a similar argument in Theorem 1. Define $N$ to be the maximum length of a $k$-modal subsequence, either increasing or decreasing first. Note $f$ maps $a_i$ into an $N$ by $N$ square. \\

By theorem 2, there exists at most $\frac{2kN^2}{(2k+1)^2}$ points $(x(a_i), y(a_i))$ such that $x(a_i) < \frac{2kN}{2k+1}$ or $y(a_i) < \frac{2kN}{2k+1}$. This is because $\sqrt{2k(\frac{2kN^2}{(2k+1)^2})} = \frac{2kN}{2k+1}$, and thus if we had more points that there's exist $a_i$ such that $x(a_i) \geq \frac{2kN}{2k+1}$ and $y(a_i) \geq \frac{2kN}{2k+1}$. And note, there's at most $(N-\frac{2kN}{2k+1})^2 = \frac{N^2}{(2k+1)^2}$. Thus, we have 

$$n \leq \frac{2kN^2}{(2k+1)^2} + \frac{N^2}{(2k+1)^2} = \frac{N^2}{2k+1}$$

and so $\sqrt{(2k+1)n} \leq N$, as required.
\end{proof}

We now construct a family of constructions that attains this lower bound. Consider the python code below. \\

\begin{verbatim}
def permMake(k, t):
    kperm = []
    count = 1
    for i in range(t):
        for j in reversed(range(count, count + t + i)):
            kperm.append(j)
        count = count + t + i
    for i in range((k-1)*t):
        for j in reversed(range(count, count + 2*t)):
            kperm.append(j)
        count = count + 2*t
    for i in reversed(range(t)):
        for j in reversed(range(count, count + t + i)):
            kperm.append(j)
        count = count + t + i
    return kperm
\end{verbatim}

To describe this in words, we have contiguous decreasing subsequences of lengths 

$$t, t+1, \ldots, 2t-1, 2t, 2t, \ldots, 2t, 2t-1, 2t-2, \ldots t,$$

where we have $(k-1)t$ contiguous decreasing subsequences of length $2t$ in total. This gives us $n \approx \frac{3t^2}{2} + (k-1)2t^2 + \frac{3t^2}{2} = (2k+1)t^2$. By casing on $k$ even and odd, one can show in either case that the maximum length of a $k$-modal subsequence is $N \leq (2k+1)t$, and so $N \leq \sqrt{(2k+1)n}$.

\section{Comments}
After writing most of this paper on August 24, 2025, we discovered that Zijian Xu has already proved Fan Chung's conjecture before us in the March of 2024 \cite{xu}. It seems like our proofs are substantially different, as our method involves "sophisticated labeling arguments" and theirs use "underlying poset structures behind k-modal subsequences." \cite{xu} We do believe, however, we were first to discover a proof that increasing subsequences have length at least $\sqrt{2kn}$. \\

On page 20 of Zijian Xu's proof \cite{xu}, we believe the proof still goes through but they forgot to consider the case if $d'(R') \neq d(R)$. On page 8 they possibly meant that the $(i-k)$-th section is given by $P_0^{i}$, judging by observation 10. \\

We believe Fan Chung's proof of the $k = 1$ case in \cite{chung} can be simplified. She did not have to define $f$ and $g$, we believe. $\lambda$ can be extended to cover all points while remaining injective, and the points fit in the region $N_1 + N_2 - N \leq x+y \leq N$ and $1 \leq x \leq N_1, 1 \leq y \leq N_2$. 

\section{Acknowledgements}
Rika Furude provided great advice for this research nipah $\sim$! We would also like to thank Edward Hou for their feedback. This problem was introduced to me by Boris Bukh. \\

\bibliographystyle{plain}

\begin{thebibliography}{10}

\bibitem{chung}
Fan R. K. Chung.
\newblock On unimodal subsequences.
\newblock {\em J. Comb. Theory Ser. A}, 29(3):267--279, 1980.
\newblock doi:\href{https://doi.org/10.1016/0097-3165(80)90021-7}{10.1016/0097-3165(80)90021-7}.

\bibitem{xu}
Zijian Xu.
\newblock On $k$-modal subsequences.
\newblock arXiv:2403.13686 [math.CO], 2024.
\newblock \url{https://arxiv.org/abs/2403.13686}.

\end{thebibliography}

\end{document}